\title{Anisotropic area measures of convex bodies}
\author{Rolf Schneider}
\date{}
\newcommand{\Sn}{{\mathbb S}^{n-1}}
\newcommand{\R}{{\mathbb R}}
\newcommand{\K}{{\mathcal K}}
\newcommand{\N}{{\mathbb N}}
\newcommand{\Ha}{\mathcal{H}}
\newcommand{\B}{\mathcal{B}}
\newcommand{\D}{{\rm d}}
\newtheorem{theorem}{Theorem}
\newtheorem{lemma}{Lemma}
\newtheorem{definition}{Definition}
\begin{document}
\maketitle

\begin{abstract}
{Motivated by the relative differential geometry, where the Euclidean normal vector of hypersurfaces is generalized by a relative normalization, we introduce anisotropic area measures of convex bodies, constructed with respect to a gauge body. Together with the anisotropic curvature measures, they are special cases of the newly introduced anisotropic support measures. We show that a convex body in $\R^n$, for which the anisotropic area measure of some order $k\in\{0,\dots,n-2\}$ is proportional to the area measure of order $n-1$, must be a $k$-tangential body of the gauge body.}\\[2mm]
{\em Keywords:} anisotropic support measure, relative differential geometry, area measure, tangential body  \\[1mm]
2020 Mathematics Subject Classification: 52A20 53C42 53B25
\end{abstract}

\section{Introduction}\label{sec1}

It is a classical result of global differential geometry that a compact hypersurface (sufficiently smooth and without boundary), which is embedded in Euclidean space $\R^n$ and on which some elementary symmetric function of the principal curvatures is constant, must be a sphere. While versions for convex hypersurfaces go back to Liebmann \cite{Lie99} and S\"uss \cite{Sus29}, contributions with increasing generality are due to Hsiung \cite{Hsi54}, Aleksandrov \cite{Ale56}, Reilly \cite{Rei77}, until the full result was proved by Ros \cite{Ros87}; a new proof was given by Montiel and Ros \cite{MR91}.

The restriction to convex hypersurfaces has the advantage that one can prove a corresponding results without differentiability assumptions. For a convex body $K$ (a compact convex set with interior points) in $\R^n$ one can define curvature measures $C_k(K,\cdot)$, $k=0,\dots,n-1$, (see \cite[Sect. 4.2]{Sch14}) that depend weakly continuously on $K$ and satisfy
$$ C_k(K,\beta) = \int_\beta H_{n-1-k}\,\D \Ha^{n-1}$$
for Borel sets $\beta$ in the boundary of $K$, if the latter is of class $C^2$. Here $H_m$ is the $m$-th normalized elementary symmetric function of the principal curvatures and $\Ha^{n-1}$ is the $(n-1)$-dimensional Hausdorff measure. Since $C_{n-1}(K,\beta) = \Ha^{n-1}(\beta)$ for $\beta$ as above, the assumption that $H_{n-1-k}$ is constant can be generalized to the assumption 
\begin{equation}\label{1.1N}
C_k(K,\cdot) =c C_{n-1}(K,\cdot)
\end{equation}
with some constant $c$, where $k\in \{0,\dots,n-2\}$. Under this assumption, it was proved in \cite{Sch79} that $K$ must be a ball. More generally, Kohlmann \cite{Koh98} showed (with a very different proof) that $K$ must be a ball if $\sum_{r=0}^{n-2} \lambda_rC_r(K,\cdot)=C_{n-1}(K,\cdot)$ with $\lambda_0,\dots,\lambda_{n-2}\ge 0$.

If $K$ is a convex body of class $C^2_+$, then the principal curvatures $k_1,\dots,k_{n-1}$ of its boundary are all positive, and one can define the principal radii $r_1,\dots,r_{n-1}$ by $r_i=1/k_i$. Let $s_k$ be the $k$-th normalized elementary symmetric function of the principal radii, considered as a function of the outer unit normal vector. Similar to the curvature measures, one can define area measures $S_k(K,\cdot)$, $k=0,\dots,n-1$, (see \cite[Sect. 4.2]{Sch14}) that depend weakly continuously on $K$ and satisfy
$$ S_k(K,\omega)= \int_\omega s_k\,\D\Ha^{n-1} $$
for Borel sets $\omega$ in the unit sphere $\Sn$ of $\R^n$, if $K$ is of class $C^2_+$. Under sufficient differentiability assumptions, the condition $H_{n-1-k}=c$ is equivalent to $s_k=cs_{n-1}$, which without differentiability assumptions can be replaced by
\begin{equation}\label{1.2N}
S_k(K,\cdot)= c S_{n-1}(K,\cdot).
\end{equation}
Interestingly, in contrast to (\ref{1.1N}), the equation (\ref{1.2N}) does not characterize balls. It was proved in \cite{Sch78}  that (\ref{1.2N}) holds if and only if $K$ is a $k$-tangential body of a ball. The latter means that $K$ contains a ball $B$ and that every supporting hyperplane of $K$ that does not support $B$ contains only $(k-1)$-singular points of $K$ (see \cite[Thm. 2.2.10]{Sch14}), that is, points at which there exist $n-k+1$ linearly independent normal vectors. For example, the convex hull of $B$ and a point outside is a $1$-tangential body of $B$.

In relative differential geometry, the unit normal vector of a hypersurface is replaced by a `relative normalization', in such a way that the gauge hypersurface representing the relative normal and the original hypersurface have parallel tangent hyperplanes at corresponding points. The relative normal is then used to define relative curvatures and to carry over other constructions of classical differential geometry. For introductions to relative differential geometry, with references to the early history, we refer to Bonnesen and Fenchel \cite[Sect. 38]{BF34} and P.A. and A.P. Schirokow \cite[Sect. VIII]{Sch63}. Presentations of basic aspects, preparing different applications, are also given by Simon \cite[§1]{Sim67} and Schneider \cite[Sect. 2]{Sch67}, and in a different style by Scheuer and Zhang \cite[Sect. 2]{SZ25}.

It seems that a revival of relative differential geometry began with Reilly \cite{Rei76}. In the introduction, the author wrote: ``The primary objective of this paper is to interpret certain theorems from the theory of nonlinear partial differential equations in terms of the relative differential geometry of nonparametric hypersurfaces.'' The theorem on closed hypersurfaces with a constant curvature function was extended to relative curvatures by He, Li, Ma and Ge \cite{HLMG09}. More contributions, emphasizing the anisotropic character, are due to He and Li \cite{HL08}, Xia \cite{Xia13}, Scheuer and Zhang \cite{SZ25}. Anisotropic curvature measures of convex bodies were introduced, and the result of Kohlmann mentioned above was extended to this setting, by Andrews, Lei, Wei and Xiong \cite{ALWX21}. As a counterpart, the purpose of the following is to introduce anisotropic area measures, depending on a gauge body $E$, and to characterize the tangential bodies of $E$, thus generalizing the result of \cite{Sch78}.

We formulate our result in the next section, after a few preliminaries.

\section{Anisotropic support measures}\label{sec2}

We work in $\R^n$ ($n\ge 2$), with scalar product $\langle\cdot\,,\cdot\rangle$ and norm $\|\cdot\|$, unit sphere $\Sn$ and unit ball $B^n$ (with centers at the origin $o$). For subsets of $\R^n$, we denote by ${\rm int}$ the interior and by ${\rm bd}$ the boundary. Let $\B(T)$ be the $\sigma$-algebra of Borel sets of a topological space $T$.

Anisotropic area measures are easily introduced if one is familiar with mixed area measures of convex bodies, as we will assume; we refer to \cite[Sect. 5.1]{Sch14}.
By $\K^n$ we denote the set of convex bodies in $\R^n$. The mixed area measure of $K_1,\dots,K_{n-1}\in\K^n$ is denoted by $S(K_1,\dots,K_{n-1},\cdot)$, It is a finite Borel measure on the unit sphere $\Sn$, and is connected to the mixed volume $V(K_1,\dots,K_n)$ of $K_1,\dots,K_n\in\K^n$ by
\begin{equation}\label{2.1N}
V(K_1,\dots,K_n)= \frac{1}{n}\int_{\Sn} h_{K_1}(u)\,S(K_2,\dots,K_n,\D u),
\end{equation}
where
$$ h_K(u) =\max\{\langle x,u\rangle:x\in K\} \quad\mbox{for }u\in\Sn$$
defines the support function of $K$ (here only needed on $\Sn$).

The principal idea of relative differential geometry is to replace the unit ball $B^n$ by a general gauge body $E$, and we do the same for general convex bodies. Let $E\in\K^n$ be given, with the property that $o\in{\rm int}\,E$. In the following, $E$ is fixed and will, therefore, often not be mentioned. In this section, we assume only that $E$ is regular and strictly convex. Therefore, at any boundary point $x$ of $E$ there is a unique outer unit normal vector $u_E(x)$, and any given unit vector $u$ is attained as an outer normal vector of $E$ at a unique boundary point $x_E(u)$ of $E$. 

We have thus defined a map $u_E: {\rm bd}\,E\to\Sn$, and since $E$ is regular and strictly convex, this is a homeomorphism from ${\rm bd}\,E$ to $\Sn$. We use it in the following definition. For $\alpha\subset\R^n$ we write $u_E(\alpha)=\{u_E(x): x\in\alpha\cap {\rm bd}\,E\}$.
\begin{definition}\label{D2.1N}
For $K\in\K^n$ and $k\in\{0,\dots,n-1\}$, the $k$-th anisotropic area measure of $K$ is defined by
$$ S_k^E(K,\alpha):= S(K[k],E[n-1-k],u_E(\alpha))\quad\mbox{for }\alpha\in \B(\R^n).$$
\end{definition}

Here $L[m]$ indicates that the argument $L$ appears $m$ times. By its definition, $S_k^E(K,\cdot)$ is a finite Borel measure, concentrated on the boundary of $E$.

We will show in the next section that $S_k^E(K,\cdot)$ is in fact the correct replacement for the $k$-th elementary symmetric function of the relative radii.

We formulate our main result.
\begin{theorem}\label{T2.1N}
Let $K\in\K^n$ and $k\in\{0,\dots,n-2\}$. The relation
$$
S_k^E(K,\cdot) = cS_{n-1}^E(K,\cdot)
$$
with a constant $c$ holds if and only if $K$ is homothetic to a $k$-tangential body of $E$.
\end{theorem}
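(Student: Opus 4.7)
\emph{Reformulation.} The plan starts by transporting the hypothesis to a family of mixed-volume identities. Since $u_E:\bd E \to \Sn$ is a homeomorphism, the equation $S_k^E(K,\cdot) = c\,S_{n-1}^E(K,\cdot)$ on $\B(\R^n)$ is equivalent to
$$S(K[k], E[n-1-k], \cdot) = c\,S(K[n-1], \cdot)$$
as finite Borel measures on $\Sn$. Integrating the support function $h_L$ of an arbitrary $L\in\K^n$ against both sides and using (\ref{2.1N}), this is in turn equivalent to
$$V(L, K[k], E[n-1-k]) = c\,V(L, K[n-1]) \qquad \text{for every } L\in\K^n. \qquad (\star)$$

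\emph{Chain of Alexandrov--Fenchel equalities.} Set $W_i := V(K[n-i], E[i])$. Specializing $(\star)$ to $L = K$ and to $L = E$ yields $W_{n-1-k} = cW_0$ and $W_{n-k} = cW_1$. By the Alexandrov--Fenchel inequality, the sequence $(W_i)_{i=0}^n$ is log-concave, so the consecutive ratios $R_i := W_{i+1}/W_i$ form a non-increasing sequence. The two identities read as
$$R_0 R_1 \cdots R_{n-2-k} = c = R_1 R_2 \cdots R_{n-1-k}$$
(both products have $n-1-k$ factors), which forces $R_0 = R_{n-1-k}$ and hence, by monotonicity, $R_0 = R_1 = \cdots = R_{n-1-k}$. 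Thus $(W_0, W_1, \ldots, W_{n-k})$ is a geometric progression with common ratio $R := R_0$, and equality holds in each Alexandrov--Fenchel inequality $W_i^2 \ge W_{i-1} W_{i+1}$ for $i = 1, \ldots, n-1-k$.

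\emph{Rigidity and converse.} The decisive step is to conclude from the resulting block of $n-1-k$ simultaneous Alexandrov--Fenchel equalities that $K$ is homothetic to a $k$-tangential body of $E$. I would invoke the equality characterization of the Alexandrov--Fenchel inequality for mixed volumes of two convex bodies, as developed in \cite[Sect.~7.6]{Sch14}; in the isotropic case $E = B^n$ this reduces to the classical argument of \cite{Sch78}. The regularity and strict convexity of $E$, combined with the full chain of equalities, should force every supporting hyperplane of $K$ that does not support $E$ to meet $K$ only in $(k-1)$-singular points, i.e., to endow $K$ with the structure of a $k$-tangential body of $E$ (the homothety factor being determined by the common value $R$). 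For the converse, if $K$ is a $k$-tangential body of $E$ up to homothety, then the decomposition of $\bd K$ into the portion sharing supporting hyperplanes with $\bd E$ and the tips of dimension at most $k-1$, together with the multilinearity of mixed volumes and the known equality cases for tangential bodies, verifies $(\star)$ directly, and the measure identity follows by reversing the reformulation step. The main obstacle is the rigidity step in the forward direction: converting the long chain of Alexandrov--Fenchel equalities into the combinatorial singular-point condition is subtle and carries the main technical weight of the proof.
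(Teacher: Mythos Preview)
Your reformulation and the derivation of the chain of Aleksandrov--Fenchel equalities coincide with the paper's argument. The rigidity step you correctly flag as the main obstacle is carried out in the paper in two stages, and the structure is worth knowing: first, the single equality $(V^E_{n-1})^2 = V^E_{n-2}V^E_n$ (your $W_1^2=W_0W_2$) is fed into Bol's theorem \cite[Thm.~7.6.19]{Sch14}, which already forces $K$ to be homothetic to an $(n-2)$-tangential body of $E$; crucially, this is what secures, after normalizing the homothety, that $K$ \emph{contains a translate of} $E$. Only with this containment in hand does the full chain (which, once $c=1$, collapses to $V^E_n=\dots=V^E_k$) permit the application of Favard's characterization \cite[Thm.~7.6.17]{Sch14} to upgrade from $(n-2)$-tangential to $k$-tangential. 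Favard's result has containment as a hypothesis, so a blanket appeal to ``the equality characterization in \cite[Sect.~7.6]{Sch14}'' does not quite suffice without isolating Bol's theorem as the first step. For the converse, the paper does not verify $(\star)$ by a direct boundary decomposition; instead it runs Favard's theorem in reverse to obtain $V^E_n=\dots=V^E_k$ and then invokes \cite[Thm.~7.4.6]{Sch14}, a general equivalence between equality in a block of consecutive Aleksandrov--Fenchel inequalities and proportionality of the corresponding mixed area measures.
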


For tangential bodies, we refer to \cite[Sect. 2.2]{Sch14}. Theorem \ref{T2.1N} will be proved in Section \ref{sec4}.

We add a remark on the Minkowski problem in its measure version. The relative Minkowski problem would ask: Given a finite Borel measure $\varphi$ on the boundary of the gauge body $E$, what are the necessary and sufficient conditions on $\varphi$ in order that there is a convex body $K\in\K^n$ such that $S_{n-1}^E(K,\cdot)=\varphi$? But this is not really a new problem. Since
$$ S_{n-1}^E(K,\alpha) = S(K[n-1],u_E(\alpha)) = S_{n-1}(K,u_E(\alpha)) \quad\mbox{for }\alpha\in\B({\rm bd}\,E),$$
the gauge body enters only via $u_E(\alpha)$. Hence, if we define $\psi(\omega):= \varphi(u_E^{-1}(\omega))$ for $\omega\in\B(\Sn)$, then the solution of the classical Minkowski problem for $\psi$ also solves the relative Minkowski problem for $\varphi$. It is obvious how to carry over the conditions from $\psi$ to $\varphi$.

Before entering into the proof of Theorem \ref{T2.1N}, we use the rest of this section to point to a common generalization of anisotropic curvature and area measures, in the form of anisotropic support measures. We modify the approach in \cite{Sch14}.

Let $K\in\K^n$. Let $x\in{\rm bd}\,K$, and let $u$ be an outer unit normal vector of $K$ at $x$. The vector $x_E(u)$ is called a {\em relative normal vector} of $K$ at $x$. A {\em relative support element} of $K$ is a pair $(x,y)$ where $x\in{\rm bd}\,K$ and $y$ is a relative normal vector of $K$ at $x$. We denote by $\Sigma^E(K)$ the set of all relative support elements of $K$, and we define $\Sigma:= \R^n\times\R^n$.

Let $x\in \R^n\setminus K$. Let $s>0$ be the largest number such that $(-sE+x)\cap {\rm int}\,K =\emptyset$. We set
$$ d^E(K,x):= s$$
and call the number $d^E(K,x)$ the {\em $E$-distance} of $x$ from $K$. We set $d^E(K,x)=0$ if $x\in K$. For $r\ge 0$ we then have
\begin{equation}\label{3.1}
d^E(K,x)\le r \Leftrightarrow (-rE+x)\cap K\not=\emptyset.
\end{equation}

There is a unique hyperplane $H$ that separates $K$ and $-sE+x$, and $K$ and $-sE+x$ have a unique common boundary point $p$. We set
$$ p^E(K,x):= p.$$
The mapping $p^E(K,\cdot)$ is called the {\em $E$-projection} to $K$.

Further, the suitably oriented unit normal vector $u$ of $H$ is an outer normal vector of $K$ at $p$, and $-u$ is an outer normal vector of $-sE+x$ at $p$. We set
$$ y^E(K,x):= x_E(u).$$
The pair $(p^E(K,x),y^E(K,x))$ is a relative support element of $K$. We note that (for fixed $K$) the point $p^E(K,x)$ and the vector $y^E(K,x)$ are uniquely determined by $x\in\R^n\setminus K$. 

\begin{lemma}\label{L1}
For $\rho>0$ and $x\in\R^n\setminus K$,
$$ d^E(K,x) = \rho \Leftrightarrow x\in {\rm bd}(K+\rho E).$$
\end{lemma}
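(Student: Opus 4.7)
The plan is to split $d^E(K,x)=\rho$ into the conjunction $d^E(K,x)\le\rho$ and $d^E(K,x)\ge\rho$, to identify the first inequality with $x\in K+\rho E$ and the second with $x\notin\inn(K+\rho E)$, and then to observe that together these two conditions say exactly $x\in\bd(K+\rho E)$.

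The first identification is immediate from (3.1): by definition of the Minkowski sum, $x\in K+\rho E\iff(-\rho E+x)\cap K\neq\emptyset$, and (3.1) rewrites the right-hand side as $d^E(K,x)\le\rho$. For the second identification, I would first use the definition of $d^E(K,x)$ as the largest $s>0$ with $(-sE+x)\cap\inn K=\emptyset$, together with the monotonicity of $s\mapsto -sE+x$ as a set, to note that $d^E(K,x)\ge\rho\iff(-\rho E+x)\cap\inn K=\emptyset$. What then remains is the equivalence
$$
(-\rho E+x)\cap\inn K=\emptyset\iff x\notin\inn(K+\rho E),
$$
which is really the identity $\inn(K+\rho E)=\inn K+\rho E$ in disguise.

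This last equivalence is the one nonroutine step, and I would prove it directly via support functions rather than invoke that identity as a black box. Using $y\in\inn L\iff\langle y,u\rangle<h_L(u)$ for all $u\in\Sn$ (valid since $K$ and $K+\rho E$ are convex bodies with nonempty interior) together with $h_{K+\rho E}=h_K+\rho h_E$, the direction ``$\Leftarrow$'' is short: if $\langle x,u\rangle\ge h_K(u)+\rho h_E(u)$ for some $u$, then $\langle x-\rho e,u\rangle\ge h_K(u)$ for every $e\in E$, so no point of $x-\rho E$ lies in $\inn K$. For ``$\Rightarrow$'', the disjoint open convex set $\inn K$ and compact convex set $x-\rho E$ can be separated by a hyperplane with unit normal $u$, and taking $\sup_{e\in E}\langle e,u\rangle=h_E(u)$ converts the resulting separation inequality into $\langle x,u\rangle\ge h_K(u)+\rho h_E(u)$, whence $x\notin\inn(K+\rho E)$. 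Combining the two identifications yields $d^E(K,x)=\rho\iff x\in(K+\rho E)\setminus\inn(K+\rho E)=\bd(K+\rho E)$, as required.
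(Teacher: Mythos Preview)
Your proof is correct. Structurally it differs from the paper's: the paper first proves $x\in\bd(K+\rho E)\Rightarrow d^E(K,x)=\rho$ by a geometric touching-point argument (pick $p\in K$ with $x\in\bd(p+\rho E)$, take a common supporting hyperplane of $K+\rho E$ and $p+\rho E$ at $x$, and translate it to $p$ to see that $x-\rho E$ is separated from $\inn K$), and then obtains the converse by contradiction from this direction (if $x\in\inn(K+\rho E)$ then $x\in\bd(K+\rho'E)$ for some $\rho'<\rho$, forcing $d^E(K,x)=\rho'$). Your decomposition into the two inequalities $d^E(K,x)\le\rho$ and $d^E(K,x)\ge\rho$, each handled independently via (3.1) and a clean separation/support-function argument, is more symmetric and avoids the auxiliary intermediate-boundary step; it also makes explicit that the heart of the matter is the identity $\inn(K+\rho E)=\inn K+\rho E$. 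Both arguments ultimately rest on the same support-function relation $h_{K+\rho E}=h_K+\rho h_E$, so the difference is one of organization rather than of underlying idea.
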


\begin{proof}
Let $x\in {\rm bd}(K+\rho E)$. Then there exists $p\in K$ such that $x\in p+\rho E$, and necessarily $x\in {\rm bd}(p+\rho E)$ (otherwise, $x\in {\rm int}(K+\rho E)$). There is a common supporting hyperplane $H$ of $K+\rho E$ and $p+\rho E$ at $x$, and it is unique since $E$ is regular. The set $x+\rho (-E)$ is the image of $p+\rho E$ under reflection in the midpoint of the segment $[x,p]$. Let $u$ be the unit normal vector of $H$ that is an outer normal vector of $K+\rho E$. Since the support functions satisfy $h_{K+\rho E}(u)= h_K(u)+h_{\rho E}(u)$, the hyperplane parallel to $H$ through $p$ supports $K$. Therefore, $(x+\rho(- E))\cap{\rm int}\,K=\emptyset$. Now it follows that $d^E(K,x)=\rho$.

Conversely, let $d^E(K,x)=\rho$. Then $x+\rho(-E)$ touches $K$ in a point $p$, and we have $x\in p+\rho E\subset K+\rho E$. If $x\in{\rm int}(K+\rho E)$, then $x\in{\rm bd}(K+\rho' E)$ for some $\rho'<\rho$, which, as shown above, would imply $d^E(x,K) =\rho'<\rho$, a contradiction.
\end{proof}

It follows that
\begin {equation}\label{A4}
\{x\in\R^n: 0< d^E(K,x)\le \rho\}= (K+\rho E)\setminus K.
\end{equation}

We need some continuity results, from which then measurability follows. First we remark the following. Let $r_0(E)$ be the radius of the largest ball with center $o$ contained in $E$. Then $r_0(E)B^n\subseteq E$ and hence
\begin{equation}\label{A5}
E+\rho B^n \subseteq\left(1+\frac{\rho}{r_0(E)}\right)E
\end{equation}
for $\rho>0$. For $x,\overline x\in\R^n\setminus K$ we state that
\begin{equation}\label{A7}
|d^E(K,x) - d^E(K,\overline x)|\le\frac{1}{r_0(E)}\|x-\overline x\|.
\end{equation}
For the proof, write $\|x-\overline x\|=a$. From (\ref{A5}) (with $\rho=a/d^E(K,x)$) we get
\begin{equation}\label{3.2}
-d^E(K,x)E+aB^n \subseteq -\left(d^E(K,x)+\frac{a}{r_0(E)}\right)E.
\end{equation}
By (\ref{3.1}), there is a point $y\in (-d^E(K,x)E+x)\cap K$. This gives
$$ y\in -d^E(K,x)+\overline x -\overline x +x \subset -d^E(K,x)+\overline x +a B^n\subset -\left(d^E(K,x)+\frac{a}{r_0(E)}\right)E+\overline x$$
by (\ref{3.2}), and since $y\in K$, it follows from (\ref{3.1}) that
$$ d^E(K,\overline x) \le d^E(K,x)+\frac{a}{r_0(E)}.$$
Together with interchanging $x$ and $\overline x$ this gives (\ref{A7}).

For $x_j\in\R^n\setminus K$, $j\in\N_0$, with $x_j\to x_0$ as $j\to\infty$, we have 
$$
d^E(K,x_j)\to d^E(K,x_0),\quad p^E(K,x_j)\to p^E(K,x_0),\quad u^E(K,x_j)\to u^E(K,x_0).
$$
The first relation follows from (\ref{A7}).
For the other two relations we note that from $x_j\to x_0$ and $d^E(K,x_j)\to d^E(K,x_0)$ it can be deduced that
$$ -d^E(K,x_j)E+x_j \to -d^E(K,x_0)E+x_0$$
in the sense of the Hausdorff metric. From this, the assertions follow immediately. 

To define now local parallel sets, we can argue similarly as in \cite[Sect. 4.1]{Sch14}. Let $\rho>0$. The map
$$ \begin{array}{cccc}
f^E_{\rho}: & (K+\rho E)\setminus K & \to & \Sigma\\
& x & \mapsto & (p^E(K,x),y^E(K,x)) 
\end{array}$$
is continuous and hence measurable. Also the function $d^E(K,\cdot)$ is continuous. Let $\eta\in \B(\Sigma)$. The {\em relative local parallel set}
$$ M^E_\rho(K,\eta):= (f^E_\rho)^{-1}(\eta) =\{x\in\R^n: 0< d^E(K,x)\le\rho,\, (p^E(K,x),y^E(K,x))\in\eta\}$$
is measurable. By (\ref{A4}) we have
$$
M^E_\rho(K,\Sigma)= (K+\rho E)\setminus K.
$$

Different from the case $E=B^n$, we now do not consider the $n$-dimensional Hausdorff measure of the local parallel set $M_\rho^E(K,\eta)$, but the $(n-1)$-dimensional Hausdorff measure of the set
$$ \gamma(K,E,\rho,\eta):= M_\rho^E(K,\eta)\cap {\rm bd}(K+\rho E).$$
By $\omega(K,E,\eta)$ we denote the set of outer unit normal vectors of $K+\rho E$ at points of $\gamma(K,E,\rho,\eta)$, for $\rho>0$. We have $u\in \omega(K,E,\eta)$ if and only if there exists $(x,y)\in\eta\cap \Sigma^E(K)$ with $u= u_E(y)$. This implies that the set $\omega(K,E,\eta)$ is independent of $\rho$, and that $\{\omega(K,E,\eta):\eta\in \B(\Sigma)\}$ is a $\sigma$-algebra containing the closed sets in $\Sn$. For $\alpha,\beta\subset\R^n$ we have
$$ \omega(K,E,\beta\times\R^n) = {\bf u}_K(\beta),\qquad \omega(K,E,\R^n\times \alpha)= u_E(\alpha),$$
where ${\bf u}_K(\beta)$ denotes the set of outer unit normal vectors of $K$ at points of $\beta\cap{\rm bd}\,K$.

Since the push-forward of the $(n-1)$-dimensional Hausdorff measure $\Ha^{n-1}$, restricted to the boundary of a convex body $L$, under the Gauss map gives the surface area measure $S_{n-1}(L,\cdot)$ of $L$, we have
\begin{equation}\label{A12a} 
\Ha^{n-1}(\gamma(K,E,\rho,\eta))=S_{n-1}(K+\rho E,\omega(K,E,\eta)).
\end{equation}
It follows from \cite[(5.18)]{Sch14} that
\begin{equation}\label{A13}
S_{n-1}(K+\rho E,\cdot) = \sum_{m=0}^{n-1} \rho^{n-1-m}\binom{n-1}{m} S(K[m],E[n-1-m],\cdot).
\end{equation}
Now we are ready for the following definition.
\begin{definition}\label{D2.2N}
For $K\in\K^n$ and $k\in\{0,\dots,n-1\}$, the $k$-th anisotropic support measure of $K$ is defined by
\begin{equation}\label{A13a} 
\Theta_k^E(K,\eta):= S(K[k],E[n-1-k],\omega(K,E,\eta))\quad \mbox{for }\eta\in\B(\Sigma).
\end{equation}
\end{definition}
It follows from (\ref{A12a}), (\ref{A13}) and (\ref{A13a}) that
$$\Ha^{n-1}(M_\rho^E(K,\eta)\cap {\rm bd}(K+\rho E))= \sum_{m=0}^{n-1} \rho^{n-1-m} \binom{n-1}{m} \Theta_m^E(K,\eta).$$
In particular, if for $\alpha\in\B(\R^n)$ and $\rho>0$ we define
\begin{equation}\label{N1} 
B_\rho^E(K,\alpha) = \{x\in\R^n: d^E(K,x) = \rho,\, y^E(K,x)\in\alpha\},
\end{equation}
we have
\begin{equation}\label{A15}
\Ha^{n-1}(B_\rho^E(K,\alpha))= \sum_{m=0}^{n-1} \rho^{n-1-m} \binom{n-1}{m} S_m^E(K,\alpha).
\end{equation}

We can now also define the anisotropic curvature measures, by
$$ C_k^E(K,\beta)= \Theta_k^E(K,\beta\times\R^n)=S(K[k],E[n-1-k],{\bf u}_K(\beta)) \quad\mbox{for } \beta\in\B(\R^n).$$

\section{Comparison with relative differential geometry}\label{sec3}

Here we point out only the changes which are necessary in \cite[Sect. 2.5]{Sch14}. We assume in this section that the convex body $K$ and the gauge body $E$ are of class $C^2_+$. Then also for each $x\in{\rm bd}\,K$ there is a unique outer unit normal vector of $K$ at $x$, which we denote by $u_K(x)$. First we re-normalize the Euclidean unit normal vector of $K$ by
$$ \xi(x)= \frac{u_K(x)}{h_E(u_K(x))}\quad\mbox{for }x\in{\rm bd}\,K.$$

We assume that a local parametrization of class $C^2$ of a neighborhood of the considered point $x\in{\rm bd}\,K$ is given by $X:M\to{\rm bd}\,K$, where $M\subset \R^{n-1}$ is an open subset. We define a local parametrization of ${\rm bd}\,E$ by
$$ Y:= u_E^{-1}\circ u_K\circ X.$$
Then $E$ and $K$ have the same outer unit normal vector at points with the same parameters. The relative normal vector of $K$ at $X(z)$ is given by $Y(z)$. The vector function $\xi$ is locally parametrized by
$$ \Xi= \frac{u_K\circ X}{h_E(u_E\circ Y)}.$$

We use classical tensor notation (including the summation convention) and denote differentiation with respect to local parameters by indices. We have
$$ \langle \Xi,X_i\rangle =0, \quad \langle \Xi, Y\rangle =1.$$
Then we define symmetric tensors $G_{ij}$ and $B_{ij}$ by
$$ G_{ij} = \langle\Xi_i,X_j\rangle,\qquad B_{ij}=\langle \Xi_i, Y_j\rangle$$
(the symmetry of the latter follows from $\langle \Xi,Y_j\rangle =0$). We have chosen the signs in view of the fact that $\xi$ is an outer normal vector of $K$ and of $E$, so that the matrices $(G_{ij})$ and $(B_{ij})$ become positive definite (a proof can be found in \cite[p. 115]{Sch14}). 

We have the relation
\begin{equation}\label{3.1N}
X_i=G_{ij}B^{jr}Y_r,
\end{equation}
where $(B^{ij})$ is the matrix inverse to $(B_{ij})$. Starting with $X_i=c_i^{\hspace{1mm}r}Y_r$, we take the scalar product with $\Xi_j$ and obtain $G_{ij} = c_i^{\hspace{1mm}r}B_{rj}$ and hence $G_{ij} B^{jk} = c_i^{\hspace{1mm}r}B_{rj}B^{jk} = c_i^{\hspace{1mm}r}\delta_r^{\hspace{1mm}k} = c_i^{\hspace{1mm}k}$, which proves (\ref{3.1N}). (Here $\delta_i^{\hspace{1mm}k}=0$ for $i\not= k$ and $\delta_i^{\hspace{1mm}i}=1$.)

The {\em relative principal curvatures} are defined as the eigenvalues of $B_{ij}$ with respect to $G_{ij}$. Equivalently, they are the solutions $\lambda$ of the equation
$$ \det(B_{ij} -\lambda G_{ij})=0.$$
Therefore, the relative radii are the solutions $\lambda$ of the equation $\det(G_{ij}B^{jk}-\lambda \delta_i^{\hspace{1mm}k})=0$.

Let $\rho>0$. Defining $\{X(z)+\rho Y(z):z\in M\}=a^E_\rho(K,M)$, this is the image of $M$ under the injective map $z\mapsto X(z)+\rho Y(z)$, which has Jacobian 
$$ |X_1+\rho Y_1,\dots, X_{n-1}+\rho Y_{n-1}|$$ 
(here $|\cdot|$ denotes a determinant). Using (\ref{3.1N}), we get
\begin{eqnarray*}
&& |X_1+\rho Y_1,\dots, X_{n-1}+\rho Y_{n-1}|\\
&& = |(G_{1j}B^{jr}+\rho \delta_1^{\hspace{1mm}r})Y_r,\dots, (G_{(n-1)j}B^{jr}+\rho \delta_{n-1}^{\hspace{1mm}r})Y_r|\\
&& = \det(G_{ij}B^{jr} +\rho \delta_i^{\hspace{1mm}r})|Y_1,\dots,Y_{n-1}|.
\end{eqnarray*}
Since the relative radii $r_1^E,\dots, r_{n-1}^E$ are the eigenvalues of the matrix $(G_{ij}B^{jr})$, the matrix $(G_{ij}B^{jr}+\rho\delta_i^{\hspace{1mm}r})$ has the eigenvalues $r_1^E+\rho,\dots,r_{n-1}^E+\rho$, hence
$$ |X_1+\rho Y_1,\dots, X_{n-1}+\rho Y_{n-1}|=\pm \sum_{m=0}^{n-1} \rho^m\binom{n-1}{m} s_{n-1-m}^E\circ Y\sqrt{\det(h_{ij})},$$
where $h_{ij}=\langle Y_i,Y_j\rangle$ is the first fundamental form of ${\rm bd}\,E$. By integration, we get
$$ \Ha^{n-1}(a_\rho^E(K,M)) = \sum_{m=0}^{n-1} \rho^{n-1-m} \binom{n-1}{m} \int_{Y(M)} s_m^E\,\D\Ha^{n-1}.$$

If $\alpha\subset\R^n$ is an open set with $\alpha\cap{\rm bd}\,E=Y(M)$, then $a_\rho^E(K,M)=B_\rho^E(K,\alpha)$ (defined by (\ref{N1})) and therefore
$$ 
\Ha^{n-1}(B_\rho^E(K,\alpha)) = \sum_{m=0}^{n-1}\rho^{n-1-m}\binom{n-1}{m} \int_{\alpha\cap{\rm bd}\,E} s_m^E\,\D \Ha^{n-1}.
$$
Since this holds for open sets, it holds for arbitrary Borel sets $\alpha\in\B(\R^n)$. 

Comparison with (\ref{A15}) now shows that
$$
S_k^E(K,\alpha)= \int_{\alpha\cap {\rm bd}\, E} s_k^E \,\D \Ha^{n-1},
$$
which in the case $E= B^n$ reduces to \cite[(4.26)]{Sch14}. Thus, the measures $S_k^E(K,\cdot)$ are in fact the correct generalizations of the elementary symmetric functions of the relative radii.

\section{Proof of Theorem \ref{T2.1N}}\label{sec4}

For $K\in\K^n$ and $j\in\{0,\dots,n\}$ we use the notation
$$
V_j^E= V(K[j],E[n-j]).
$$

Now we assume again that $E$ has the properties as in Section \ref{sec2}, and we assume that $K\in\K^n$ and $k\in\{0,\dots,n-2\}$ satisfy
$$ S_k^E(K,\cdot)= cS_{n-1}^E(K,\cdot)$$
with some constant $c$. Replacing $K$ by a suitable homothet, we can assume that $c=1$. Then
\begin{equation}\label{A25a} 
S(K[k],E[n-1-k],\cdot)= S(K[n-1],\cdot),
\end{equation}
by Definition \ref{D2.1N} and the fact that $u_E$ is a homeomorphism.

By (\ref{A25a}) and (\ref{2.1N}) (where we choose $K$ or $E$ for $K_1$) we get 
\begin{equation}\label{A26} 
V_{k+1}^E = V_n^E,\qquad V_k^E= V_{n-1}^E.
\end{equation}
By the Aleksandrov--Fenchel inequalities for mixed volumes (e.g., \cite[Thm. 7.3.1]{Sch14}), we have
$$ \frac{V_{n-1}^E}{V_n^E} \ge  \frac{V_{n-2}^E}{V_{n-1}^E} \ge \dots \ge  \frac{V_k^E}{V_{k+1}^E}.$$
By (\ref{A26}), this holds with equality signs everywhere, in particular, $(V_{n-1}^E)^2 = V_{n-2}^EV_n^E$. By a result of Bol \cite{Bol43} (which is reproduced in \cite[Thm. 7.6.19]{Sch14}), $K$ is homothetic to an $(n-2)$-tangential body of $E$. Thus, after a translation we can assume that $K$ is an $(n-2)$-tangential body of $rE$, for some $r>0$. We state that
\begin{equation}\label{A27}
V_n^E= rV_{n-1}^E.
\end{equation}
In fact, by (\ref{2.1N}) and $S(K,\dots,K,\cdot)= u_K\sharp\Ha^{n-1}|_{{\rm bd}\,K}$ (where $u_K\sharp$ denotes the push-forward under the $\Ha^{n-1}$-almost everywhere on ${\rm bd}\,K$ defined Gauss map of $K$), assertion (\ref{A27}) is equivalent to
$$ \int_{{\rm bd}\,K} h_K\circ u_K\,\D\Ha^{n-1} = r\int_{{\rm bd}\,K} h_E\circ u_K\,\D\Ha^{n-1}.$$
Since $K$ is a tangential body of $rE$, we have $h_K(u)=rh_E(u)$ whenever $u$ is an outer unit normal vector at a regular point of $K$, and hence $(h_K\circ u_K)(x) = (h_E\circ u_K)(x)$ holds for $\Ha^{n-1}$-almost all $x\in {\rm bd}\,K$. This proves (\ref{A27}). 

We have proved that
$$ r =\frac{V_{n}^E}{V_{n-1}^E} =  \frac{V_{n-1}^E}{V_{n-2}^E} = \dots =  \frac{V_{k+1}^E}{V_{k}^E},$$
from which it follows that $V_n^E= r^{n-p}V_p^E$ for $p=k,\dots,n-1$. Together with (\ref{A26}), this shows that $r=1$ and then that $V_n^E= \dots = V_k^E$. Since $K$ contains a translate of $E$, the latter implies, as shown by Favard \cite[p. 273]{Fav33}, that $K$ is a $k$-tangential body of $E$ (or see \cite[Thm. 7.6.17]{Sch14} with $L=E$ and $n-p=k$, where (7.149) is equivalent to $V_k^E= V_{k+1}^E$).

Conversely, suppose that $K$ is a $k$-tangential body of $E$. Since Favard gives a necessary and sufficient condition (or see \cite[Thm. 7.6.17]{Sch14} ), this implies that $V_n^E= \dots = V_k^E$. From this, we deduce that 
\begin{equation}\label{4.2}
(V_j^E)^2 = V_{j-1}^EV_{j+1}^E\quad\mbox{for }j=k+1,\dots,n-1.
\end{equation}
In order to apply \cite[Thm. 7.4.6]{Sch14}, we set there on p. 406:
$$ m=n-k,\quad K_0=K,\, K_1=E,\quad {\mathscr C}=(K[k]).$$
Then the connection with our present notation is given by $V_{(i)}= V_{n-i}^E$, $S_{(i)} = S_{n-1-i}^E(K,\cdot)$, so that the condition (b) of \cite[Thm. 7.4.6]{Sch14} is satisfied, according to (\ref{4.2}). From (d) of that theorem it now follows that the measures $S_{n-1}^E(K,\cdot)$ and $S_k^E(K,\cdot)$ are proportional.
\hspace*{\fill}$\Box$

\noindent Author's address:\\[2mm]
Rolf Schneider\\Mathematisches Institut, Albert--Ludwigs-Universit{\"a}t\\D-79104 Freiburg i.~Br., Germany\\E-mail: rolf.schneider@math.uni-freiburg.de

\end{document}